\theoremstyle{plain}
\newtheorem{theorem}{Theorem}[section]
\newtheorem{lemma}[theorem]{Lemma}
\newtheorem{corollary}[theorem]{Corollary}
\newtheorem{proposition}[theorem]{Proposition}
\theoremstyle{definition}
\newtheorem*{definition}{Definition}
\theoremstyle{remark}
\newcommand{\Z}{\mathbb{Z}}
\newcommand{\Hy}{\mathbb{H}}
\newcommand{\Cb}{\mathbb{C}}
\begin{document}

\markboth{Ji-Young Ham, Joongul Lee, Alexander Mednykh and Aleksey Rasskazov}{Volumes of link $7_3^2$}


\title{An explicit volume formula for the link $7_3^2 (\alpha, \alpha)$ cone-manifolds
}

\author{Ji-Young Ham 
}

\address{Department of Science, Hongik University, 
94 Wausan-ro, Mapo-gu, Seoul,
 04066\\
  Korea.}
\email{jiyoungham1@gmail.com}

\author{Joongul Lee
}

\address{Department of Mathematics Education, Hongik University, 
94 Wausan-ro, Mapo-gu, Seoul,
04066\\
   Korea.}
\email{jglee@hongik.ac.kr}
   
\author{Alexander Mednykh*
}   
\thanks{*The author  was funded by the Russian Science Foundation (grant 16-41-02006).}
   
\address{Sobolev Institute  of Mathematics,  pr. Kotyuga 4, Novosibirsk  630090  \\
Novosibirsk State University,  Pirogova 2, Novosibirsk  630090\\
 Russia.}
\email{mednykh@math.nsc.ru}

\author{Aleksey Rasskazov
}

\address{Webster International University \\
146 Moo 5, Tambon Sam Phraya, Cha-am, Phetchaburi 76120\\
Tailand}
\email{arasskazov69@webster.edu}   

\maketitle

\begin{abstract}
 We calculate the volume of the $7_3^2$ link cone-manifolds using the Schl\"{a}fli formula.  As an application, we give the volume of the cyclic coverings branched over the link. 
\end{abstract}

\keywords{hyperbolic orbifold, hyperbolic cone-manifold, volume, link $7_3^2$, orbifold covering, Riley-Mednykh polynomial.}

\subjclass[2010]{57M27,57M25.}

\section{Introduction}

Let us denote the link complement of $7_3^2$ in Rolfsen's link table by $X$. Note that it is a hyperbolic knot. Hence by Mostow-Prasad rigidity theorem, $X$ has a unique hyperbolic structure. 
Let $\rho_{\infty}$ be the holonomy representation from $\pi_1(X)$ to ${\textnormal{PSL}}(2, \mathbb C)$ and denote $\rho_{\infty} \left(\pi_1(X)\right)$ by $\Gamma$, a Kleinian group.
$X$ is a $({\textnormal{PSL}}(2, \mathbb C),\Hy^3)$-manifold and can be identified with 
$\Hy^3/ \Gamma$. 
Thurston's orbifold theorem guarantees an orbifold, $X(\alpha)=X(\alpha,\alpha)$, with underlying space $S^3$ and with the  link $7_3^2$ as the singular locus of the cone-angle 
$\alpha=2 \pi/k$ for some nonzero integer $k$, can be identified with 
$\Hy^3/\Gamma^{\prime}$ for some $\Gamma^{\prime} \in {\textnormal{PSL}}(2, \mathbb C)$; the hyperbolic structure of $X$ is deformed to the hyperbolic structure of 
$X(\alpha)$.
For the intermediate angles whose multiples are not $2 \pi$ and not bigger than $\pi$, Kojima~\cite{K1} showed that the hyperbolic structure of 
$X(\alpha)$ can be obtained uniquely by deforming nearby orbifold structures.
Note that there exists an angle $\alpha_0 \in [\frac{2\pi}{3},\pi)$ for the link $7_3^2$ such that $X(\alpha)$ is hyperbolic for $\alpha \in (0, \alpha_0)$, Euclidean for $\alpha=\alpha_0$, and spherical for $\alpha \in (\alpha_0, \pi]$ \cite{P2,HLM1,K1,PW}. 
For further knowledge of cone-manifolds a reader can consult~\cite{CHK,HMP}.

Even though we have wide discussions on orbifolds, it seems to us we have a little in regard to cone-manifolds.
Explicit volume formulae for hyperbolic cone-manifolds of knots and links are known a little. The volume formulae for hyperbolic cone-manifolds of the knot 
$4_1$~\cite{HLM1,K1,K2,MR1}, the knot $5_2$~\cite{M2}, the link $5_1^2$~\cite{MV1}, 
the link $6_2^2$~\cite{M1}, and the link $6_3^2$~\cite{DMM1} have been computed. In~\cite{HLM2} a method of calculating the volumes of two-bridge knot cone-manifolds was introduced but without explicit formulae. In~\cite{HMP,HL1}, explicit volume formulae of cone-manifolds for the hyperbolic twist knot and for the knot with Conway notation $C(2n,3)$ are computed. Similar methods are used for computing Chern-Simons invariants of orbifolds for the twist knot  and $C(2n,3)$ knot in~\cite{HL,HL2}.

The main purpose of the paper is to find an explicit and efficient volume formula of hyperbolic cone-manifolds for the link $7_3^2$. The following theorem gives the volume formula for 
$X(\alpha)$.

\begin{theorem}\label{thm:main}
Let $X(\alpha)$, $0 \leq \alpha < \alpha_0$ be the hyperbolic cone-manifold with underlying space $S^3$ and with singular set the link $7_3^2$ of cone-angle 
$\alpha$. $X(0)$ denotes $X$. Then the volume of $X(\alpha)$ is given by the following formula

\begin{align*}
\text{\textnormal{Vol}} \left(X(\alpha)\right) &= \int_{\alpha}^{\pi} 2 \log \left| \frac{A-iV}{A+iV}\right| \: d\alpha,
\end{align*}

\noindent where 
for $A=\cot{\frac{\alpha}{2}}$, $V$ ($\text{\textnormal{Re}}(V) \leq 0$ and 
 $\text{\textnormal{Im}}(V) \geq 0$ is the largest) is a zero of the Riley-Mednykh polynomial $P=P(V,A)$ for the link $7^2_3$ given below.  
\medskip
\begin{align*}
P&= 8 V^5+8 A^2 V^4+\left(8 A^4+16 A^2-8\right) V^3+\left(4 A^6+8 A^4-12
   A^2\right) V^2 \\
   &+\left(A^8+4 A^6-2 A^4-12 A^2+1\right) V-4 A^6-8 A^4+4 A^2.
\end{align*}
\end{theorem}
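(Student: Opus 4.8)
The plan is to apply the Schl\"afli formula for cone-manifolds, which relates the variation of hyperbolic volume under change of cone-angle to the length of the singular locus. With both components of $7_3^2$ carrying cone-angle $\alpha$, it reads
\[
\frac{d}{d\alpha}\,\mathrm{Vol}\!\left(X(\alpha)\right)=-\tfrac12\bigl(L_1(\alpha)+L_2(\alpha)\bigr),
\]
where $L_1,L_2$ are the lengths of the two singular geodesics; the symmetry of the link interchanging its components forces $L_1=L_2=:L(\alpha)$, so the right-hand side is $-L(\alpha)$. Comparing with the integrand $2\log\bigl|(A-iV)/(A+iV)\bigr|$ of the claimed formula, the theorem reduces to two tasks: identifying the geometric holonomy among the representations cut out by the Riley-Mednykh polynomial, and proving that $L(\alpha)=2\log\bigl|(A-iV)/(A+iV)\bigr|$.

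First I would parametrize the $\mathrm{SL}(2,\C)$-representations of $\pi_1(S^3\setminus 7_3^2)$ by sending the two meridians to elliptic elements of rotation angle $\alpha$, each of trace $2\cos(\alpha/2)$; the substitution $A=\cot(\alpha/2)$ records this angle. After normalizing the meridian images into triangular form there remains a single essential complex parameter, and imposing the defining relation of this two-bridge link group produces, after a suitable rescaling of that parameter to $V$, exactly the polynomial $P(V,A)$. The complete structure at $\alpha=0$ corresponds by Mostow-Prasad rigidity to one distinguished root; continuity in $\alpha$ together with the deformation results of Thurston and Kojima quoted above tracks this root throughout $[0,\alpha_0)$, and one checks that it is the branch singled out in the statement by $\mathrm{Re}(V)\le 0$ with $\mathrm{Im}(V)\ge 0$ maximal.

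The technical heart is the length computation. I would express the longitude $\lambda$ of one component as an explicit word in the meridian generators, form the matrix $\rho_\alpha(\lambda)$, and take its eigenvalue $\zeta$ of modulus at least one, so that $L(\alpha)=2\log|\zeta|$ is the translation length of the core geodesic. Starting from $\trace\rho_\alpha(\lambda)=\zeta+\zeta^{-1}$ and reducing repeatedly modulo the relation $P(V,A)=0$ to eliminate high powers of $V$, the aim is to collapse the result to $|\zeta|=\bigl|(A-iV)/(A+iV)\bigr|$, and hence $L(\alpha)=2\log\bigl|(A-iV)/(A+iV)\bigr|$. This is the step I expect to be the main obstacle: the longitude of $7_3^2$ is a long word, so the entries of $\rho_\alpha(\lambda)$ are unwieldy and the decisive cancellations emerge only after a systematic reduction against $P$. (The shape of the answer is natural: since $A\pm iV=\bigl(\cos\tfrac\alpha2\pm iV\sin\tfrac\alpha2\bigr)/\sin\tfrac\alpha2$, the ratio $(A-iV)/(A+iV)$ is exactly the kind of expression built from the meridian eigenvalue $e^{i\alpha/2}$ that the eigenvalue bookkeeping should reproduce.)

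Finally I would integrate. Differentiating the claimed formula in its lower limit returns $\frac{d}{d\alpha}\mathrm{Vol}(X(\alpha))=-2\log\bigl|(A-iV)/(A+iV)\bigr|=-L(\alpha)$, in agreement with Schl\"afli, so only the upper endpoint must be justified. On the spherical range $(\alpha_0,\pi]$ the geometric root $V$ becomes real, whence $|A-iV|=|A+iV|$ and the integrand vanishes identically; therefore $\int_\alpha^\pi$ equals $\int_\alpha^{\alpha_0}$, while the Euclidean point supplies the normalization $\mathrm{Vol}(X(\alpha_0))=0$. Combining the Schl\"afli relation with this boundary value then yields the asserted integral formula for every $\alpha\in[0,\alpha_0)$.
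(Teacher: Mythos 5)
Your overall skeleton matches the paper's: apply the Schl\"afli formula, identify the derivative of the volume with $-l_\alpha$ where $l_\alpha$ is the common length of the two singular geodesics, show the integrand vanishes on the spherical range $(\alpha_0,\pi]$ so that the upper limit can be pushed from $\alpha_0$ to $\pi$, and use degeneration at the Euclidean angle $\alpha_0$ for the boundary value. The branch selection via continuity from the complete structure is also in the spirit of the paper, which picks out the component with $\mathrm{Im}(V)\ge 0$ (equivalently $|L|\ge 1$) that becomes real at $\alpha_0$ and has $\mathrm{Re}(V)\le 0$.

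The genuine gap is exactly where you flag it: the identity $l_\alpha=2\log\bigl|(A-iV)/(A+iV)\bigr|$ is the heart of the theorem, and you leave it as a proposed brute-force computation --- form the matrix of the longitude word, extract its eigenvalue, and ``reduce repeatedly modulo $P(V,A)=0$'' until the answer collapses. You do not carry this out, and you correctly anticipate that the longitude of $7_3^2$ is a long enough word that the cancellations are far from automatic; as written, nothing guarantees the reduction terminates in the claimed closed form. The paper avoids this computation entirely by a conceptually different argument (its ``Pythagorean theorem''): the parameter $V=\cosh\rho$ is, by the choice of normal form for $h(s)$ and $h(t)$, the hyperbolic cosine of the complex distance between the oriented axes of the two meridians. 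Fenchel's line-matrix formula $\cosh\rho=-\tfrac12\mathrm{tr}\bigl(l(S)\,l(T)\bigr)$, together with the observation that $S^{-1}L_T$ is a conjugate of $S^{-1}$ (so $\mathrm{tr}(S^{-1}L_T)=\mathrm{tr}(S)$, Lemma~\ref{lem:trace}) and the Cayley--Hamilton identity $\mathrm{tr}(SL_T)+\mathrm{tr}(S^{-1}L_T)=\mathrm{tr}(S)\mathrm{tr}(L_T)$, yields $iV=A\tanh(\gamma_\alpha/4)$ and hence $L=e^{\gamma_\alpha/2}=(A-iV)/(A+iV)$ \emph{identically on the representation variety}, with no reduction modulo $P$ needed. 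If you want to complete your version, you should either carry out and verify the elimination against $P$ explicitly (a substantial computer-algebra task) or replace that step by the trace-identity argument; without one of these, the central equality of the theorem remains unproved.
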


The following corollary gives the hyperbolic volume of the $k$-fold strictly-cyclic covering~\cite{MM,MuV} over 
the link $7_3^2$, $M_k (X)$, for $k \geq 3$.

\begin{corollary}
The volume of $M_k (X)$ is given by the following formula

\begin{align*}
\text{\textnormal{Vol}} \left(M_k (X))\right) &=k \int_{\frac{2 \pi}{k}}^{\pi} 2 \log \left| \frac{A-iV}{A+iV}\right| \: d\alpha,
\end{align*}

\noindent where 
for $A=\cot{\frac{\alpha}{2}}$, $V$ ($\text{\textnormal{Re}}(V) \leq 0$ and 
 $\text{\textnormal{Im}}(V) \geq 0$ is the largest) is a zero of the Riley-Mednykh polynomial $P=P(V,A)$ for the link $7_3^2$.
\end{corollary}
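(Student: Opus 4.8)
The plan is to derive the corollary directly from Theorem~\ref{thm:main}, using the fact that $M_k(X)$ is a branched covering of the cone-manifold $X(2\pi/k)$ together with the multiplicativity of hyperbolic volume under finite coverings. First I would recall the orbifold interpretation of the cone-manifold. For $\alpha = 2\pi/k$ the cone-manifold $X(2\pi/k) = X(2\pi/k, 2\pi/k)$ is a hyperbolic orbifold with underlying space $S^3$ and singular locus the link $7_3^2$, each component carrying cone-angle $2\pi/k$. By the construction of the strictly-cyclic covering in~\cite{MM,MuV}, the manifold $M_k(X)$ is the $k$-fold cyclic cover of $S^3$ branched over the link in which the meridian of each component is wound $k$ times. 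Equivalently, $M_k(X) \to X(2\pi/k)$ is a $k$-fold orbifold covering: winding $k$ times around the singular axis turns the cone-angle $2\pi/k$ into $2\pi$, so the branch locus unwraps into ordinary (nonsingular) geodesics and $M_k(X)$ is a genuine smooth manifold.

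Second I would verify that $X(2\pi/k)$ lies in the hyperbolic regime, so that Theorem~\ref{thm:main} applies with $\alpha = 2\pi/k$. For $k \geq 3$ we have $2\pi/k \leq 2\pi/3 \leq \alpha_0$, and (since $\alpha_0 > 2\pi/3$ for this link, so that even $k=3$ lands strictly inside the hyperbolic range) the cone-angle stays in the hyperbolic interval $(0, \alpha_0)$ described in the introduction; the case $k = 2$, which would give the spherical angle $\alpha = \pi$, is correctly excluded. The hyperbolic metric on $X(2\pi/k)$ then pulls back along the covering $M_k(X) \to X(2\pi/k)$ to a complete hyperbolic metric on $M_k(X)$, smooth everywhere because the only potential singularities, along the branch locus, are removed by the unwrapping.

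Finally, a $k$-fold Riemannian covering multiplies volume by $k$, so
\[
\mathrm{Vol}(M_k(X)) = k\,\mathrm{Vol}(X(2\pi/k)).
\]
Substituting $\alpha = 2\pi/k$ into Theorem~\ref{thm:main} gives
\[
\mathrm{Vol}(X(2\pi/k)) = \int_{2\pi/k}^{\pi} 2\log\left|\frac{A - iV}{A + iV}\right|\,d\alpha,
\]
where, for each value of the integration variable, $A = \cot(\alpha/2)$ and $V$ is the same distinguished root of the Riley-Mednykh polynomial $P(V,A)$ selected in the theorem. Multiplying by $k$ yields the asserted formula.

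I expect the only genuine subtlety to be the covering-theoretic bookkeeping in the first step: one must confirm that the covering labelled strictly-cyclic in~\cite{MM,MuV} is indeed the one whose deck group acts with the meridians of both components mapping coherently to the generator of $\Z/k$, so that the quotient orbifold is $X(2\pi/k)$ with equal cone-angles rather than an orbifold with differing angles on the two link components. Once this identification is fixed, no further work with the polynomial $P$ is needed and the formula follows at once from Theorem~\ref{thm:main} and volume multiplicativity.
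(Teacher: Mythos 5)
Your argument is correct and is exactly the (implicit) justification the paper intends: the corollary is stated without proof as an immediate consequence of Theorem~\ref{thm:main}, via the identification of $M_k(X)$ as a $k$-fold orbifold covering of $X(2\pi/k)$ and multiplicativity of volume, with the hyperbolicity check $2\pi/k \le 2\pi/3 < \alpha_0 \approx 2.83003$ for $k \ge 3$. No discrepancy with the paper's approach.
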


In Section~\ref{sec:link73}, we present the fundamental group $\pi_1(X)$ of $X$ with slope $9/16$. In Section~\ref{sec:poly}, we give the defining equation of the representation variety of $\pi_1(X)$. In Section~\ref{sec:longitude}, we compute the longitude of the link $7_3^2$ using the Pythagorean theorem. And in Section~\ref{sec:proof}, we give the proof of Theorem~\ref{thm:main} using the Schl\"{a}fli formula. 
\medskip
\section{Link $7_3^2$} \label{sec:link73}

\begin{figure} 
\begin{center}
\resizebox{7cm}{!}{\includegraphics{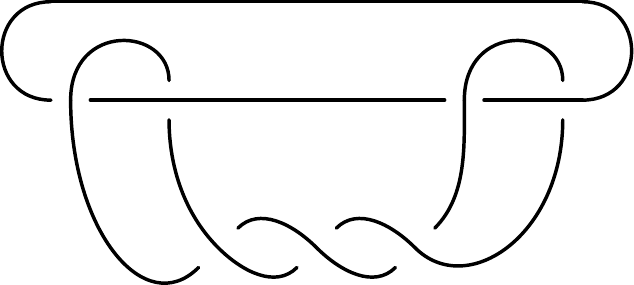}}

\caption{Link $7_3^2$ in Rolfsen's link table} \label{fig:link}
\end{center}
\end{figure}

Link $7_3^2$ is presented in Figure~\ref{fig:link}. It is the same as $W_3$ from~\cite{DMM1}.
The slope of this link is $7/16$. The link with slope $9/16$ is the mirror of the link $7_3^2$. Since the volume of the link with slope $7/16$ is the same as the volume of link with slope $9/16$, in the rest of the paper, the link with slope $9/16$ is used. 

The following fundamental group of $X$ is stated in~\cite{DMM1} with slope $7/16$.

\begin{proposition} \label{thm:fundamentalGroup}
$$\pi_1(X)=\left\langle s,t \ |\ sws^{-1}w^{-1}=1\right\rangle,$$
where $w=s^{-1}[s,t]^2[s,t^{-1}]^2$.
\end{proposition}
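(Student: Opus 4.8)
The plan is to exploit the fact that $7_3^2$ is a two-bridge link and to read the presentation off its Schubert normal form. Concretely, the link with slope $9/16$ is the two-bridge link $\mathfrak b(16,9)$, and I would take $s$ and $t$ to be the two meridians associated with the two bridges (equivalently, the two over-arcs at the top of the standard four-plat diagram underlying Figure~\ref{fig:link}). Starting from the Wirtinger presentation of that diagram, with one meridian generator per arc and one relation per crossing, I would transport every generator along its under-strand so that each becomes a conjugate of $s$ or of $t$. This is the familiar reduction of a two-bridge Wirtinger presentation to two generators, and it leaves a single surviving relation, since in a two-bridge diagram all crossing relations but one are consequences of the others.

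Once the reduction is carried out, the surviving relation is the Riley--Schubert normal form for $\mathfrak b(p,q)$, namely $s\,\Omega=\Omega\,s$, where
\begin{equation*}
\Omega = t^{\epsilon_1}s^{\epsilon_2}t^{\epsilon_3}\cdots t^{\epsilon_{15}},\qquad \epsilon_i=(-1)^{\lfloor 9i/16\rfloor},
\end{equation*}
an alternating word of length $p-1=15$ beginning and ending with $t$; because $p=16$ is even (two components) the relation is a commutation, i.e.\ $s\Omega s^{-1}\Omega^{-1}=1$. I would then compute the exponent sequence explicitly, obtaining
\begin{equation*}
(\epsilon_1,\dots,\epsilon_{15})=(+,-,-,+,+,-,-,+,-,-,+,+,-,-,+),
\end{equation*}
and regroup the alternating word into commutators. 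Expanding $w=s^{-1}[s,t]^2[s,t^{-1}]^2$ and cancelling the leading $s^{-1}s$ reproduces exactly these fifteen letters in order, signs included, so that $\Omega=w$. This identifies the surviving relation with $sws^{-1}w^{-1}=1$ and establishes the presentation.

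To reconcile this with the presentation recorded in~\cite{DMM1} for slope $7/16$, I would invoke the symmetry $q\mapsto p-q$ relating $\mathfrak b(16,9)$ to its mirror $\mathfrak b(16,7)$: since $\lfloor i(p-q)/p\rfloor=i-\lceil iq/p\rceil$, the two sign sequences are related by an explicit flip, and the orientation-reversing homeomorphism of complements matches the two presentations while preserving the volume, which justifies working with slope $9/16$ throughout. The main obstacle is the bookkeeping in the Wirtinger reduction---tracking the conjugating word along each under-strand and confirming that all but one crossing relation are redundant. The final regrouping of $\Omega$ into $s^{-1}[s,t]^2[s,t^{-1}]^2$ is then a short mechanical verification, reassuringly constrained by the matching word length $p-1=15$ and the matching sign pattern $(-1)^{\lfloor 9i/16\rfloor}$.
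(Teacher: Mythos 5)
Your proposal is correct, but it takes a genuinely different route from the paper: the paper offers no proof of this proposition at all, simply importing the presentation from~\cite{DMM1} (where the link appears as $W_3$ with slope $7/16$). You instead derive it from the Schubert normal form of the two-bridge link $\mathfrak b(16,9)$, and the computational core of your argument checks out: the sign sequence $\epsilon_i=(-1)^{\lfloor 9i/16\rfloor}$ for $i=1,\dots,15$ is indeed $(+,-,-,+,+,-,-,+,-,-,+,+,-,-,+)$, and the alternating word $t^{\epsilon_1}s^{\epsilon_2}\cdots t^{\epsilon_{15}} = ts^{-1}t^{-1}sts^{-1}t^{-1}st^{-1}s^{-1}tst^{-1}s^{-1}t$ is exactly what one gets by expanding $s^{-1}[s,t]^2[s,t^{-1}]^2$ with the paper's convention $[s,t]=sts^{-1}t^{-1}$ (visible in the proof of Lemma~\ref{lem:swn}) and cancelling the leading $s^{-1}s$; since $p=16$ is even the single surviving relation is the commutation $s\Omega s^{-1}\Omega^{-1}=1$, as required. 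What your approach buys is a self-contained, checkable derivation and an explanation of \emph{why} the relator has this commutator shape; what the citation buys is brevity. Two caveats worth making explicit if you write this up: the identification of $7_3^2$ with $\mathfrak b(16,7)$ (hence its mirror with $\mathfrak b(16,9)$, consistent with $16-7=9$) is itself a table lookup rather than something you prove, and the Wirtinger-to-Schubert reduction you describe is standard but left as bookkeeping, so you are ultimately leaning on the same classical two-bridge machinery that~\cite{DMM1} and Riley~\cite{R3} use--just one layer lower down.
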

\section{$\left({\textnormal{PSL}}(2, \mathbb C),\mathbb{H}^3 \right) \text{ structure of } X (\alpha)$} \label{sec:poly}
Let $R=\text{Hom}(\pi_1(X), \text{SL}(2, \Cb))$. 
Given  a set of generators, $s,t$, of the fundamental group for 
$\pi_1(X)$, we define
 a set $R\left(\pi_1(X)\right) \subset \text{SL}(2, \Cb)^2 \subset \Cb^{8}$ to be the set of
 all points $(h(s),h(t))$, where $h$ is a
 representaion of $\pi_1(X)$ into $\text{SL}(2, \Cb)$. Since the defining relation of 
 $\pi_1(X)$ gives the defining equation of $R\left(\pi_1(X)\right)$~\cite{R3}, $R\left(\pi_1(X)\right)$ is an affine algebraic set in $\Cb^{8}$. 
$R\left(\pi_1(X)\right)$ is well-defined up to isomorphisms which arise from changing the set of generators. We say elements in $R$ which differ by conjugations in $\text{SL}(2, \Cb)$ are \emph{equivalent}. 
A point on the variety gives the $\left({\textnormal{PSL}}(2, \mathbb C),\mathbb{H}^3 \right) \text{ structure of } X(\alpha)$.

Let
\begin{center}
$$\begin{array}{ccccc}
h(s)= \left[\begin{array}{cc}
     \cos \frac{ \alpha}{2} & i e^{\frac{\rho}{2}} \sin \frac{ \alpha}{2}        \\
      i e^{-\frac{\rho}{2}} \sin \frac{ \alpha}{2}  &  \cos \frac{ \alpha}{2}  
                     \end{array} \right]                     
\text{,} \ \ \
h(t)=\left[\begin{array}{cc}
        \cos \frac{ \alpha}{2} & i e^{-\frac{\rho}{2}} \sin \frac{ \alpha}{2}      \\
         i e^{\frac{\rho}{2}} \sin \frac{ \alpha}{2}   &  \cos \frac{ \alpha}{2} 
                 \end{array}  \right].
\end{array}$$
\end{center}

Then $h$ becomes a representation if and only if $A=\cot{\frac{\alpha}{2}}$ and $V=\cosh{\rho}$ satisfies a polynomial equation~\cite{R3,MR2}. We call the defining polynomial 
of the algebraic set $\{(V,A)\}$ as the \emph{Riley-Mednykh polynomial} for the link $7_3^2$. Thoughout the paper, $h$ can be sometimes any representation and  sometimes the unique hyperbolic representation.

Given the fundamental group of $X$,
$$\pi_1(X)=\left \langle s,t \ |\  sws^{-1}w^{-1}=1 \right \rangle,$$
where $w=s^{-1}[s,t]^2[s,t^{-1}]^2$, let $S=h(s),\  T=h(t)$ and $W=h(w)$. Then the trace of $S$ and the trace of $T$ are both 
$2 \cos \frac{\alpha}{2}$. 

\begin{lemma}\label{lem:swn}
For $n \in \text{\textnormal{SL}}(2, \Cb)$ which satisfies $nS=S^{-1}n$, $nT=T^{-1}n$, and $n^2=-I$,
$$SWS^{-1}W^{-1}=-(SWn)^2. $$
\end{lemma}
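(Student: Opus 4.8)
The plan is to reduce the stated matrix identity to the single claim that $n$ inverts $W$ as well, i.e. $nWn^{-1}=W^{-1}$ (equivalently $nW=W^{-1}n$), and then to verify this last claim by a purely combinatorial argument about the word $w$. First I would establish the algebraic reduction. Writing $(SWn)^2=SWnSWn$, I would push the interior factor $nS$ to the left using the hypothesis $nS=S^{-1}n$, then push the resulting $nW$ through by the claim $nW=W^{-1}n$, and finally collapse $n^2=-I$:
\[
(SWn)^2=SW(nS)Wn=SWS^{-1}(nW)n=SWS^{-1}W^{-1}n^2=-\,SWS^{-1}W^{-1}.
\]
Multiplying by $-1$ gives exactly $SWS^{-1}W^{-1}=-(SWn)^2$. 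Observe that this manipulation uses none of the defining relation of $\pi_1(X)$, only $nS=S^{-1}n$, $n^2=-I$, and the yet-to-be-proved $nW=W^{-1}n$; the remaining hypothesis $nT=T^{-1}n$ will enter solely through the proof of that claim.

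Next I would prove $nWn^{-1}=W^{-1}$ by transporting it to the free group $F=\langle s,t\rangle$. Let $\sigma\colon F\to F$ be the automorphism determined by $\sigma(s)=s^{-1}$ and $\sigma(t)=t^{-1}$. Since $h$ is a homomorphism and $nSn^{-1}=S^{-1}=h(\sigma(s))$, $nTn^{-1}=T^{-1}=h(\sigma(t))$, the two homomorphisms $x\mapsto n\,h(x)\,n^{-1}$ and $x\mapsto h(\sigma(x))$ from $F$ to $\text{SL}(2,\Cb)$ agree on the generators, hence on all of $F$. Taking $x=w$ gives $nWn^{-1}=h(\sigma(w))$, so it suffices to verify the free-group identity $\sigma(w)=w^{-1}$, after which $nWn^{-1}=h(w^{-1})=W^{-1}$ and the reduction above closes the argument.

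The main point, and the only step that is not formal, is this identity $\sigma(w)=w^{-1}$, which reflects a symmetry of the particular word $w=s^{-1}[s,t]^2[s,t^{-1}]^2$ from Proposition~\ref{thm:fundamentalGroup}. I would verify it by expanding $w$ into reduced form: after the cancellation $s^{-1}s$ it becomes the length-$15$ reduced word
\[
w=t\,s^{-1}t^{-1}s\,t\,s^{-1}t^{-1}s\,t^{-1}s^{-1}t\,s\,t^{-1}s^{-1}t.
\]
Applying $\sigma$ (invert every letter, keep the order) and, separately, forming $w^{-1}$ (reverse the order and invert every letter) yield the same reduced word letter by letter, so $\sigma(w)=w^{-1}$. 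This is exactly where the precise arrangement of the two squared commutators is essential: a single commutator, or a generic product, is \emph{not} $\sigma$-reversible, so the check cannot be shortcut and must be carried out on the exact word of the presentation. I expect this combinatorial verification to be the crux; everything else is the formal bookkeeping of the first two paragraphs.
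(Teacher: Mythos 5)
Your proof is correct and follows essentially the same route as the paper's: both apply $nS=S^{-1}n$ to the middle factor of $SWnSWn$ and then use that conjugation by $n$ inverts $W$, which rests on the word identity $\sigma(w)=w^{-1}$. The paper carries out that last step inline, writing out the $n$-conjugated word in $S,T$ and identifying it with $W^{-1}$, whereas you isolate the identity and verify it via the palindromic reduced form of $w$ --- a presentational difference only.
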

\begin{proof}
 \begin{equation*}
\begin{split}
 (SWn)^2 & =SWnSWn=SWS^{-1}n(S^{-1}(STS^{-1}T^{-1})^2(ST^{-1}S^{-1}T)^2)n \\
              & =SWS^{-1}(S(S^{-1}T^{-1}ST)^2(S^{-1}TST^{-1})^2)n^2=-SWS^{-1}W^{-1}.
 \end{split}
\end{equation*}   
\end{proof}

From the structure of the algebraic set of $R\left(\pi_1(X)\right)$ with coordinates $h(s)$ and $h(t)$ we have the defining equation of 
$R\left(\pi_1(X)\right)$. The following theorem is stated in~\cite[Proposition 4]{DMM1} with slope $7/16$.

\begin{theorem} \label{thm:RMpolynomial}
$h$ is a representation of $\pi_1(X)$ if $V$ is a root of the following Riley-Mednykh polynomial $P=P(V,A)$ which is given below. 

\medskip

\begin{align*}
P&= 8 V^5+8 A^2 V^4+\left(8 A^4+16 A^2-8\right) V^3+\left(4 A^6+8 A^4-12
   A^2\right) V^2 \\
   &+\left(A^8+4 A^6-2 A^4-12 A^2+1\right) V-4 A^6-8 A^4+4 A^2.
\end{align*}
\end{theorem}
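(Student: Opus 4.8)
The plan is to establish Theorem~\ref{thm:RMpolynomial} by direct computation, reducing the single matrix relation $SWS^{-1}W^{-1}=I$ to a scalar polynomial identity in $V$ and $A$. The starting observation is that the defining relation of $\pi_1(X)$ is equivalent to $W$ commuting with $S$, i.e. $SWS^{-1}W^{-1}=I$. By Lemma~\ref{lem:swn}, this relation is equivalent to $(SWn)^2=-I$, provided one exhibits the auxiliary matrix $n$ with $nS=S^{-1}n$, $nT=T^{-1}n$, and $n^2=-I$. So first I would construct $n$ explicitly from the given forms of $S=h(s)$ and $T=h(t)$: since $S$ and $T$ are the stated symmetric matrices with equal diagonal entries $\cos\frac{\alpha}{2}$, conjugation by a suitable antidiagonal involution inverts them, and one checks directly that $n=\begin{bmatrix} 0 & -1 \\ 1 & 0 \end{bmatrix}$ (or a conjugate thereof adapted to the $e^{\pm\rho/2}$ twisting) satisfies all three requirements. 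This verifies the hypotheses of Lemma~\ref{lem:swn} for our specific representation.

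Next, the condition $(SWn)^2=-I$ says that the matrix $M:=SWn\in\mathrm{SL}(2,\Cb)$ squares to $-I$, which for a trace-zero-characteristic element is equivalent to the single scalar equation $\trace(M)=0$ (since $M^2-\trace(M)\,M+\det(M)I=0$ by Cayley--Hamilton, and $\det M=1$ forces $M^2=-I$ exactly when $\trace M=0$). Thus the entire matrix relation collapses to the scalar condition
\begin{equation*}
\trace(SWn)=0.
\end{equation*}
The core computational step is therefore to expand $\trace(SWn)$ as an explicit function of $A$ and $V$. Here I would substitute $W=S^{-1}[S,T]^2[S,T^{-1}]^2$, express each commutator $[S,T]=STS^{-1}T^{-1}$ and $[S,T^{-1}]$ as a matrix in the entries of $S$ and $T$, and multiply everything out. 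Because $\trace S=\trace T=2\cos\frac{\alpha}{2}$ and the off-diagonal products involve $e^{\pm\rho/2}$ only through the combination $\cosh\rho=V$, every trace that appears is a polynomial in $\cos\frac{\alpha}{2}$, $\sin\frac{\alpha}{2}$, and $V$; rewriting via $A=\cot\frac{\alpha}{2}$ and clearing the common powers of $\sin\frac{\alpha}{2}$ yields a polynomial in $A$ and $V$. After normalizing the leading coefficient one obtains $P(V,A)$ exactly as stated.

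The main obstacle is purely the bookkeeping of the trace expansion: $W$ is a word of length roughly eight in $S^{\pm 1}$ and $T^{\pm 1}$, so $SWn$ involves multiplying on the order of ten $2\times 2$ matrices, and the resulting polynomial has total degree eight in $A$ and five in $V$. I would manage this by exploiting the symmetry between $S$ and $T$ (they are interchanged by swapping $\rho\leftrightarrow-\rho$, hence $V$ is invariant), which halves the independent computations, and by using the trace identities $\trace(XY)+\trace(XY^{-1})=\trace(X)\trace(Y)$ repeatedly to reduce high-order words to polynomials in the three basic traces $\trace S$, $\trace T$, and $\trace(ST)$. In practice this step is best verified with a computer algebra system, after which one confirms that the scalar vanishing condition $\trace(SWn)=0$ reproduces the displayed $P$. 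The reverse implication---that any root $V$ of $P$ yields a genuine representation---follows because the construction is reversible: the vanishing of $\trace(SWn)$ forces $(SWn)^2=-I$, hence by Lemma~\ref{lem:swn} the relation $SWS^{-1}W^{-1}=I$ holds, so $h$ respects the single defining relator and extends to a homomorphism on $\pi_1(X)$.
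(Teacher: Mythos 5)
Your overall strategy is the same as the paper's: use Lemma~\ref{lem:swn} to replace the relator condition $SWS^{-1}W^{-1}=I$ by $(SWn)^2=-I$, reduce that to the scalar equation $\trace(SWn)=0$ via Cayley--Hamilton, and then expand the trace in terms of $A$ and $V$. Two points need repair, one small and one substantive. The small one: your candidate $n=\left[\begin{smallmatrix}0&-1\\1&0\end{smallmatrix}\right]$ does not satisfy $nS=S^{-1}n$ for the given $S$, because conjugation by the antidiagonal involution swaps the off-diagonal entries $ie^{\rho/2}\sin\frac{\alpha}{2}$ and $ie^{-\rho/2}\sin\frac{\alpha}{2}$ rather than negating them; the matrix that works (and the one the paper uses) is the diagonal $n=\left[\begin{smallmatrix}i&0\\0&-i\end{smallmatrix}\right]$, which negates both off-diagonal entries of $S$ and of $T$ simultaneously. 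You hedge with ``or a conjugate thereof,'' but a conjugate adapted to $S$ would generally fail for $T$, so the explicit verification matters.

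The substantive gap is your claim that expanding $\trace(SWn)$ and ``normalizing the leading coefficient'' yields $P$ exactly. It does not: the paper's computation gives $\trace(SWn)=-4i\sinh\rho\,(2V^2+A^4+2A^2-1)\,P$, so the trace is a product in which $P$ is only one of three nontrivial factors (and the factor $\sinh\rho$ is not even a polynomial in $V=\cosh\rho$, contrary to your assertion that every quantity appearing is polynomial in $V$). Your plan as written would therefore terminate with a polynomial that is visibly not the displayed $P$, and the one genuinely non-mechanical step of the proof --- identifying which factor deserves to be called the Riley--Mednykh polynomial --- is missing. The paper handles this by discarding $\sinh\rho$ and any factor all of whose roots are real or purely imaginary (these cannot carry the geometric representations), leaving $P$ as the unique remaining factor. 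Note that for the literal ``if'' direction stated in the theorem your logic still closes, since $P$ dividing $\trace(SWn)$ is enough to conclude that a root of $P$ forces $\trace(SWn)=0$ and hence gives a representation; but you should state the divisibility rather than the false equality, and you cannot claim to have derived the specific polynomial $P$ without the factor-selection argument.
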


\begin{proof}
Note that $SWS^{-1}W^{-1}=I$, which gives the defining equations of 
$R\left(\pi_1(X)\right)$, is equivalent to $(SWn)^2=-I$ in $\text{SL}(2,\Cb)$ 
by Lemma~\ref{lem:swn} and  
$(SWn)^2=-I$ in $\text{SL}(2,\Cb)$ is equivalent to $\text{\textnormal{tr}}(SWn)=0$.

We can find  two $n$'s in $\text{\textnormal{SL}}(2, \Cb)$ which satisfies $nS=S^{-1}n$ and $n^2=-I$ by direct computations. The existence and the uniqueness of the isometry (the involution) which is represented by $n$ are shown in~\cite[p. 46]{F}. Since two $n$'s give the same element in 
$\text{\textnormal{PSL}}(2, \Cb)$, we use one of them.
Hence, we may assume
 \begin{center}
$$\begin{array}{cc}
n=\left[\begin{array}{cc}
        i & 0    \\
        0 & -i
       \end{array}  \right],
\end{array}$$
\end{center}

 \begin{center}
$$\begin{array}{ccccc}
S=\left[\begin{array}{cc}
     \cos \frac{ \alpha}{2} & i e^{\frac{\rho}{2}} \sin \frac{ \alpha}{2}        \\
      i e^{-\frac{\rho}{2}} \sin \frac{ \alpha}{2}  &  \cos \frac{ \alpha}{2}  
                     \end{array} \right],                          
\ \ \
T=\left[\begin{array}{cc}
        \cos \frac{ \alpha}{2} & i e^{-\frac{\rho}{2}} \sin \frac{ \alpha}{2}      \\
         i e^{\frac{\rho}{2}} \sin \frac{ \alpha}{2}   &  \cos \frac{ \alpha}{2} 
                 \end{array}  \right].          
\end{array}$$
\end{center}

Recall that $P$ is the defining polynomial of the algebraic set $\{(V,A)\}$ and the defining polynomial of $R\left(\pi_1(X)\right)$ corresponding to our choice of 
$h(s)$ and $h(t)$. By direct computation $P$ is a factor of $\text{\textnormal{tr}}(SWn)=-4 i \sinh{\rho} (2 V^2+ A^4+ 2 A^2-1 )  P$. As in~\cite{DMM1}, $P$ can not be $\sinh{\rho}$ or have only real roots. Also, $P$ can not have only purely imaginary roots similarly. $P$ in the theorem is the only factor of 
$\text{\textnormal{tr}}(SWn)$ which is different from $\sinh{\rho}$ and has roots which are not real or purely imaginary.
$P$ is the Riley-Mednykh polynomial.
\end{proof}

\section{Longitude}
\label{sec:longitude}
   Let $l _s= ws$ and $l_t=(t^{-1}[t,s]^2[t,s^{-1}]^2)t$. Then $l_s$ and $l_t$ are the longitudes which are null-homologus in $X$. 
  Let $L_S=h(l_s)$ and Let $L_T=h(l_t)$.
 
 \begin{lemma} \label{lem:trace}
$tr(S^{-1}L_T)=tr(S)$ and $tr(T^{-1}L_S)=tr(T)$.
\end{lemma}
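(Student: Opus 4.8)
The plan is to exploit the evident $s \leftrightarrow t$ symmetry of the presentation to collapse the two assertions into a single one, and then to verify the surviving identity by an explicit matrix computation on the locus where $h$ is a representation.

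First I would record the symmetry. Set $J = \bigl[\begin{smallmatrix} 0 & 1 \\ 1 & 0 \end{smallmatrix}\bigr]$ and check directly from the displayed matrices that $JSJ^{-1} = T$ and $JTJ^{-1} = S$; thus conjugation by $J$ realizes on holonomies the automorphism of $\pi_1(X)$ interchanging $s$ and $t$. Since $l_t = t^{-1}[t,s]^2[t,s^{-1}]^2\, t$ is obtained from $l_s = ws = s^{-1}[s,t]^2[s,t^{-1}]^2\, s$ simply by swapping $s$ and $t$, applying this automorphism word-by-word gives $J L_S J^{-1} = L_T$ (this step is purely formal and needs no relation). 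Because the trace is a conjugacy invariant and $JT^{-1}J^{-1}=S^{-1}$, we get $\trace(T^{-1}L_S) = \trace\bigl(J\, T^{-1}L_S\, J^{-1}\bigr) = \trace(S^{-1}L_T)$, while $\trace(S) = \trace(T) = 2\cos\frac{\alpha}{2}$ has already been recorded above. Hence the two claimed equalities are equivalent, and it suffices to prove $\trace(S^{-1}L_T) = \trace(S)$.

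Next I would establish this single identity by direct computation. Writing $L_T = T^{-1}[T,S]^2[T,S^{-1}]^2 T$, I expand the product of the explicit matrices $S$ and $T$ (entries in $A = \cot\frac{\alpha}{2}$ and $e^{\pm\rho/2}$), read off $\trace(S^{-1}L_T)$ as an expression in $A$, $V = \cosh\rho$ and $\sinh\rho$, and then reduce this expression modulo the Riley--Mednykh relation $P(V,A)=0$ of Theorem~\ref{thm:RMpolynomial} — equivalently, modulo the commutation relation $SW = WS$ that makes $h$ a representation — checking that it collapses to $2\cos\frac{\alpha}{2} = \trace(S)$. The main obstacle is precisely this second step: the word $[T,S]^2[T,S^{-1}]^2$ yields a sizeable matrix product, and the raw trace one obtains is \emph{not} literally $\trace(S)$ but becomes so only after the representation relation is imposed, so the genuine work is the bookkeeping of the expansion together with the verification that the remainder modulo $P$ vanishes. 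As with the trace computation behind Theorem~\ref{thm:RMpolynomial}, this is most safely carried out with a computer algebra system rather than by hand.
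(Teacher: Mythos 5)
Your symmetry reduction is sound: conjugation by $J$ does interchange $S$ and $T$, carries $L_S$ to $L_T$ word-by-word, and preserves traces, so the two equalities are indeed equivalent and $\trace(S)=\trace(T)=2\cos\frac{\alpha}{2}$. The problem is with your plan for the surviving identity, which both overestimates the work required and misdiagnoses where the content lies. Your assertion that the raw trace ``is not literally $\trace(S)$ but becomes so only after the representation relation is imposed'' is false: $s^{-1}l_t$ is conjugate to $s^{-1}$ already in the free group on $s,t$. Explicitly, with $g=t^{-1}s^{-1}tst^{-1}s^{-1}t$ one has
\[
s^{-1}l_t \;=\; s^{-1}\,t^{-1}[t,s]^2[t,s^{-1}]^2\,t \;=\; g\,s^{-1}\,g^{-1},
\]
as one sees by cancelling the leading $s^{-1}t^{-1}\cdot tst^{-1}s^{-1}$ down to $t^{-1}s^{-1}$ and regrouping the remaining letters. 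Hence $\trace(S^{-1}L_T)=\trace(S^{-1})=\trace(S)$ for \emph{arbitrary} $S,T\in\mathrm{SL}(2,\Cb)$ --- no representation relation, no Riley--Mednykh polynomial, and no computer algebra are needed. This free-group conjugation is precisely the paper's proof (the second equality is then handled there ``in a similar way,'' which your $J$-symmetry replaces cleanly). Your proposed CAS verification would in fact succeed --- it would simply reveal that the reduction modulo $P$ is vacuous --- so the argument is not unsalvageable, but as written it defers the entire content of the lemma to an unperformed large computation and rests on a structural claim about that computation which does not hold. You should replace the second step by the one-line conjugacy observation above.
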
 

\begin{proof}
Since 

\begin{align*}
S^{-1}L_T &=S^{-1}(T^{-1}(TST^{-1}S^{-1}TST^{-1}S^{-1}\cdot TS^{-1}T^{-1}STS^{-1}T^{-1}S) T)\\
           &=(T^{-1}S^{-1}TST^{-1}S^{-1}T)(S^{-1})(T^{-1}S^{-1}TST^{-1}S^{-1}T)^{-1},
\end{align*}

$$tr(S^{-1}L_T)=tr(S^{-1})=tr(S).$$

The second statement can be obtained in a similar way.
\end{proof} 
  
\begin{definition} \label{def:longitude}
 The \emph{complex length} of the longitude $l$ ($l_s$ or $l_t$) of the link $7_3^2$ is the complex number 
 $\gamma_{\alpha}$ modulo $4 \pi \Z$ satisfying 
\begin{align*}
 \text{\textnormal{tr}}(h(l))=2 \cosh \frac{\gamma_{\alpha}}{2}.
\end{align*}
 Note that 
 $l_{\alpha}=|Re(\gamma_{\alpha})|$ is the real length of the longitude of the cone-manifold $X(\alpha)$.
\end{definition}

By sending common fixed points of $T$ and $L_{T}=h(l_t)$ to $0$ and $\infty$, we have

\begin{center}
$$\begin{array}{ccccc}
T=\left[\begin{array}{cc}
                       e^{\frac{i \alpha}{2}} & 0         \\
                        0      &  e^{-\frac{i \alpha}{2}} 
                     \end{array} \right],                          
\ \ \
L_T= \left[\begin{array}{cc}
                    e^{\frac{\gamma_{\alpha}}{2}}& 0         \\
                   0      &  e^{-\frac{\gamma_{\alpha}}{2}}
                 \end{array}  \right],
\end{array}$$
\end{center}
            
and the following normalized line matrices of $T$ (resp. $L_T$) which share the fixed points with $T$ (resp. $L_T$).

\begin{align*}
l(T) & \equiv \frac{T-T^{-1}}{2i \sinh \frac{i \alpha}{2}}  \\
        &=\frac{1}{i (e^{\frac{i \alpha}{2}}-e^{-\frac{i \alpha}{2}})}
       \left[\begin{array}{cc}
                  e^{\frac{i \alpha}{2}}-e^{-\frac{i \alpha}{2}}& 0                \\
                 0                & e^{-\frac{i \alpha}{2}}-e^{\frac{i \alpha}{2}}  
                 \end{array} \right]\\
        &=\left[\begin{array}{cc}
                  -i & 0                \\
                 0                & i
                 \end{array} \right],
 \end{align*} 

\begin{align*}
l(L_T) & \equiv \frac{L_T-L_T^{-1}}{2i \sinh \frac{\gamma_{\alpha}}{2}}  \\
        &=\frac{1}{i (e^{\frac{\gamma_{\alpha}}{2}}-e^{-\frac{\gamma_{\alpha}}{2}})}
       \left[\begin{array}{cc}
                  e^{\frac{\gamma_{\alpha}}{2}}-e^{-\frac{\gamma_{\alpha}}{2}} & 0                \\
                 0  & e^{-\frac{\gamma_{\alpha}}{2}}-e^{\frac{\gamma_{\alpha}}{2}}  
                 \end{array} \right]\\
        &=\left[\begin{array}{cc}
                 -i & 0                \\
                 0                & i
                 \end{array} \right],
 \end{align*} 
which give the orientations of axes of $T$ and $L_T$.                       

Now, we are ready to prove the following theorem which gives Theorem~\ref{thm:mpytha}.
Recall that  $\gamma_{\alpha}$ modulo $4 \pi \Z$ is the \emph{complex length} of the longitude $l_s$ or $l_t$ of  $X(\alpha)$. The following theorem is a particular case of Proposition 5 from~\cite{DMM1}.
 
\begin{theorem}(Pythagorean Theorem)~\cite{DMM1} \label{thm:pytha}
Let $X(\alpha)$ be a hyperbolic cone-manifold and let $\rho$ be the complex distance between the oriented axes $S$ and $T$. 
Then we have
$$i \cosh \rho = \cot \frac{\alpha}{2} \coth (\frac{\gamma_{\alpha}}{4}).$$
\end{theorem}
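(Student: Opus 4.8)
The plan is to carry out the whole computation in the frame already fixed in this section, in which $T$ and $L_T$ are simultaneously diagonal with common oriented axis the geodesic from $0$ to $\infty$, so that
\[
T=\begin{pmatrix} e^{\frac{i\alpha}{2}} & 0 \\ 0 & e^{-\frac{i\alpha}{2}}\end{pmatrix},\qquad L_T=\begin{pmatrix} e^{\frac{\gamma_{\alpha}}{2}} & 0 \\ 0 & e^{-\frac{\gamma_{\alpha}}{2}}\end{pmatrix},
\]
and to treat $S=h(s)=\begin{pmatrix} a & b \\ c & d\end{pmatrix}\in \mathrm{SL}(2,\mathbb{C})$ as a general matrix subject only to $\trace(S)=a+d=2\cos\frac{\alpha}{2}$. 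The two geometric inputs I need are (i) the complex distance $\rho$ between the oriented axes of $S$ and $T$, read off from the line matrices introduced above, and (ii) the constraint that $L_T$ imposes on $S$ through Lemma~\ref{lem:trace}. Eliminating the entries of $S$ between (i) and (ii) should produce the stated identity.

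For step (i) I would use the line matrix $l(S)=(S-S^{-1})/(2i\sinh\frac{i\alpha}{2})$, together with $l(T)=\begin{pmatrix}-i & 0\\0 & i\end{pmatrix}$ already computed, and the standard formula $\cosh\rho=-\tfrac12\,\trace(l(S)\,l(T))$ for the complex distance between two oriented geodesics carrying unit line matrices. Since $l(T)$ is diagonal, the trace of the product sees only the diagonal part of $l(S)$, and a short computation yields $\cosh\rho$ as a multiple of $a-d$; concretely $a-d=2i\sin\frac{\alpha}{2}\,\cosh\rho$. This is precisely where the orientation data recorded by $l(T)=l(L_T)=\begin{pmatrix}-i & 0\\0 & i\end{pmatrix}$ enters, and it is what will pin down both the factor $i$ and the sign in the final formula.

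For step (ii) I would expand $\trace(S^{-1}L_T)$ using the diagonal form of $L_T$, obtaining $d\,e^{\frac{\gamma_{\alpha}}{2}}+a\,e^{-\frac{\gamma_{\alpha}}{2}}$, and set this equal to $\trace(S)=a+d$ by Lemma~\ref{lem:trace}. Factoring the resulting equation forces a proportionality between $a$ and $d$ of the form $a=d\,e^{\pm\frac{\gamma_{\alpha}}{2}}$ (up to the sign discussed below), which expresses the ratio $\tfrac{a-d}{a+d}$ as a hyperbolic cotangent of $\frac{\gamma_{\alpha}}{4}$. Combining this with $a-d=2i\sin\frac{\alpha}{2}\cosh\rho$ and $a+d=2\cos\frac{\alpha}{2}$ and dividing then eliminates $a$ and $d$ and gives $i\cosh\rho=\cot\frac{\alpha}{2}\coth\frac{\gamma_{\alpha}}{4}$.

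The hard part, and the only genuinely delicate point, is the bookkeeping of orientations and of the $\mathrm{SL}(2,\mathbb{C})$-lift of the longitude. The two natural lifts of $L_T$ differ by $-I$, which shifts $\gamma_{\alpha}$ by $2\pi i$ and interchanges $\tanh\frac{\gamma_{\alpha}}{4}$ with $\coth\frac{\gamma_{\alpha}}{4}$, since $\coth\!\left(x+\tfrac{i\pi}{2}\right)=\tanh x$; likewise reversing the orientation of either axis sends $\cosh\rho$ to $-\cosh\rho$. To land on the stated identity I must use exactly the lift and the axis orientations fixed by the computation $l(T)=l(L_T)=\begin{pmatrix}-i & 0\\0 & i\end{pmatrix}$ at the start of the section, as these are what select $\coth$ over $\tanh$ and fix the overall sign. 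Once the convention is tracked correctly, everything else reduces to routine $2\times 2$ algebra.
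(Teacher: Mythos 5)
Your overall route is the same as the paper's: you compute the complex distance via the line-matrix formula $\cosh\rho=-\tfrac12\trace(l(S)l(T))$ in the frame where $T$ and $L_T$ are diagonal with $l(T)=l(L_T)$, and you feed in Lemma~\ref{lem:trace} to constrain $S$. The only difference is that you carry out the elimination on the explicit entries $a,d$ of $S$, whereas the paper does it with the Cayley--Hamilton trace identity $\trace(SL_T)+\trace(S^{-1}L_T)=\trace(S)\trace(L_T)$; these are the same computation in different clothing, and your version is fine (modulo the harmless remark that the factorization in step (ii) needs $e^{\gamma_\alpha/2}\neq 1$, which holds since the longitude is loxodromic).

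The one genuine problem is your final line. With the conventions you yourself fix, Lemma~\ref{lem:trace} gives $d\,e^{\gamma_\alpha/2}+a\,e^{-\gamma_\alpha/2}=a+d$, hence $a=d\,e^{\gamma_\alpha/2}$ and
$\frac{a-d}{a+d}=\frac{e^{\gamma_\alpha/4}-e^{-\gamma_\alpha/4}}{e^{\gamma_\alpha/4}+e^{-\gamma_\alpha/4}}=\tanh\frac{\gamma_\alpha}{4}$ --- a hyperbolic \emph{tangent}, not the ``hyperbolic cotangent'' you claim. Combined with $a-d=2i\sin\frac{\alpha}{2}\cosh\rho$ and $a+d=2\cos\frac{\alpha}{2}$, this yields $i\cosh\rho=\cot\frac{\alpha}{2}\tanh\frac{\gamma_\alpha}{4}$. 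That is precisely what the last displayed line of the paper's own proof produces ($\cosh\rho=-i\cot\frac{\alpha}{2}\tanh\frac{\gamma_\alpha}{4}$) and is the form actually invoked in the proof of Theorem~\ref{thm:mpytha}; the $\coth$ in the statement of Theorem~\ref{thm:pytha} is evidently a typo. Your attempt to recover $\coth$ by appealing to the $\pm I$ ambiguity in the lift of $L_T$ does not work here: once you write $L_T=\mathrm{diag}(e^{\gamma_\alpha/2},e^{-\gamma_\alpha/2})$ and impose $\trace(L_T)=2\cosh\frac{\gamma_\alpha}{2}$ together with Lemma~\ref{lem:trace}, all choices are pinned down and the computation unambiguously gives $\tanh$. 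So your derivation is sound, but its conclusion contradicts its own intermediate steps; correct the final identity to $\tanh$ (matching the paper's computation and its downstream use) rather than forcing agreement with the misprinted statement.
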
 

\begin{proof}
\begin{align*}
\cosh \rho &=-\frac{\text{\textnormal{tr}}(l(S) l(T))}{2}\\
               &=-\frac{\text{\textnormal{tr}}(l(S) l(L_T))}{2} \\
               &=\frac{\text{\textnormal{tr}}((S-S^{-1}) (L_T-L_T^{-1}))}
               {8 \sinh \frac{i \alpha}{2} \sinh \frac{ \gamma_{\alpha}}{2}}\\
               &=\frac{\text{\textnormal{tr}}(SL_T-S^{-1}L_T-SL_T^{-1}+(L_TS)^{-1}))}
               {8 \sinh \frac{i \alpha}{2} \sinh \frac{ \gamma_{\alpha}}{2}}\\ 
               &=\frac{2 (\text{\textnormal{tr}}(SL_T)-\text{\textnormal{tr}}(S^{-1}L_T))}
               {8 \sinh \frac{i \alpha}{2} \sinh \frac{ \gamma_{\alpha}}{2}}\\  
               &=\frac{\text{\textnormal{tr}}(S) \text{\textnormal{tr}}(L_T) -2 \text{\textnormal{tr}}(S^{-1}L_T)}
               {4 \sinh \frac{i \alpha}{2} \sinh \frac{ \gamma_{\alpha}}{2}}\\ 
               &=\frac{\text{\textnormal{tr}}(S) \text{\textnormal{tr}}(L_T) -2 \text{\textnormal{tr}}(S)}
               {4 \sinh \frac{i \alpha}{2} \sinh \frac{ \gamma_{\alpha}}{2}}\\
               &=\frac{\text{\textnormal{tr}}(S) (\text{\textnormal{tr}}(L_T)-2)}
               {4 \sinh \frac{i \alpha}{2} \sinh \frac{ \gamma_{\alpha}}{2}}\\
               &=\frac{2 \cos \frac{\alpha}{2} (2\cosh{\frac{ \gamma_{\alpha}}{2}}-2)}
               {4 i \sin \frac{\alpha}{2} \sinh \frac{ \gamma_{\alpha}}{2}}\\
               &=-i \cot \frac{\alpha}{2} \tanh (\frac{\gamma_{\alpha}}{4}).
               \end{align*}
where the first equality comes from ~\cite[p. 68]{F}, the sixth equality comes from the Cayley-Hamilton theorem, and the seventh equality comes from Lemma~\ref{lem:trace}.
Therefore, we have
$$i \cosh \rho = \cot \frac{\alpha}{2} \coth (\frac{\gamma_{\alpha}}{4}).$$
\end{proof}

Pythagorean theorem~\ref{thm:pytha} gives the following theorem which relates the eigenvalues of $h(l)$ and $V=\cosh{\rho}$ for $A=\cot \frac{\alpha}{2}$.

\begin{theorem}\label{thm:mpytha}
Recall that $l$ is the longitude. By conjugating if necessary, we may assume $h(l)$ is upper triangular. Let $L=h(l)_{11}$. Let $A=\cot \frac{\alpha}{2}$. Then the following formulae show that there is a one to one correspondence between the the eigenvalues of $h(l)$ and $V=\cosh{\rho}$:

$$iV=A \frac{L-1}{L+1} \ and \ L= \frac{A-iV}{A+iV}.$$
\end{theorem}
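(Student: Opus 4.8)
The plan is to read both identities off the Pythagorean theorem (Theorem~\ref{thm:pytha}) once the quantity $\frac{\gamma_\alpha}{4}$ appearing there is re-expressed through the eigenvalue $L$. I first note that the two displayed equations are algebraically equivalent: starting from $iV=A\frac{L-1}{L+1}$, clearing denominators gives $iV(L+1)=A(L-1)$, and collecting the $L$-terms solves for $L$ as a M\"obius function of $iV$. Hence it suffices to prove one of them and rearrange, and I concentrate on establishing $iV=A\frac{L-1}{L+1}$.

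For this I use the final line of the displayed computation in the proof of Theorem~\ref{thm:pytha}, which yields $\cosh\rho=-i\cot\frac{\alpha}{2}\tanh\frac{\gamma_\alpha}{4}$; multiplying by $i$ and writing $V=\cosh\rho$, $A=\cot\frac{\alpha}{2}$ gives $iV=A\tanh\frac{\gamma_\alpha}{4}$. It remains to convert $\tanh\frac{\gamma_\alpha}{4}$ into $L$. Because $h(l)$ has been conjugated to upper-triangular form, the entry $L=h(l)_{11}$ is one of the two eigenvalues of $h(l)$, and by Definition~\ref{def:longitude} the relation $\operatorname{tr}(h(l))=2\cosh\frac{\gamma_\alpha}{2}$ forces those eigenvalues to be $e^{\pm\gamma_\alpha/2}$. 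Labelling them so that $L=e^{\gamma_\alpha/2}$, the half-angle manipulation
\[
\tanh\frac{\gamma_\alpha}{4}=\frac{e^{\gamma_\alpha/4}-e^{-\gamma_\alpha/4}}{e^{\gamma_\alpha/4}+e^{-\gamma_\alpha/4}}=\frac{e^{\gamma_\alpha/2}-1}{e^{\gamma_\alpha/2}+1}=\frac{L-1}{L+1}
\]
substitutes directly into $iV=A\tanh\frac{\gamma_\alpha}{4}$ to give $iV=A\frac{L-1}{L+1}$. Solving for $L$ produces the second identity, while replacing $L$ by the companion eigenvalue $L^{-1}=e^{-\gamma_\alpha/2}$ interchanges the two formulas; this is precisely the one-to-one correspondence between the eigenvalues of $h(l)$ and $V$ that the statement asserts.

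The part I expect to require the most care is the bookkeeping of signs and branches, not any deep computation. Three conventions have to be made mutually consistent: which of $e^{\pm\gamma_\alpha/2}$ sits in the $(1,1)$ position after the conjugation to upper-triangular form, the orientations of the axes of $S$ and $T$ fixed by the normalized line matrices $l(T)$ and $l(L_T)$ in Section~\ref{sec:longitude}, and the fact that $\gamma_\alpha$ is determined only modulo $4\pi\Z$. Together these decide whether the hyperbolic tangent of $\frac{\gamma_\alpha}{4}$ is read as $\frac{L-1}{L+1}$ or as its reciprocal, and hence whether one lands on $L=\frac{A-iV}{A+iV}$ or on $L=\frac{A+iV}{A-iV}$. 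I would fix the labelling once and for all using the normalizations $\operatorname{Re}(V)\le 0$ and $\operatorname{Im}(V)\ge 0$ selected in Theorem~\ref{thm:main}, after which the substitution above is forced and both displayed identities follow.
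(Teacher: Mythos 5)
Your proposal is correct and follows essentially the same route as the paper: the paper's proof likewise multiplies the identity obtained at the end of the proof of Theorem~\ref{thm:pytha} (in its $\tanh$ form, $i\cosh\rho=\cot\frac{\alpha}{2}\tanh\frac{\gamma_{\alpha}}{4}$) by $i$ and substitutes $L=e^{\gamma_{\alpha}/2}$ via $\tanh\frac{\gamma_{\alpha}}{4}=\frac{L-1}{L+1}$, then solves for $L$. Your additional remarks on sign and branch bookkeeping are sensible but go beyond what the paper records.
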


\begin{proof}
By Theorem~\ref{thm:pytha},
\begin{equation*}
\begin{split}
i V &= i \cosh \rho \\
     &= \cot \frac{\alpha}{2}\tanh ( \frac{\gamma_{\alpha}}{4}) \\
     &= \cot \frac{\alpha}{2}\frac{\sinh ( \frac{\gamma_{\alpha}}{4})}
     {\cosh ( \frac{\gamma_{\alpha}}{4})} \\
     &= \cot \frac{\alpha}{2}
     \frac{e^{\frac{\gamma_{\alpha}}{4}}-e^{-\frac{\gamma_{\alpha}}{4}}}{e^{\frac{\gamma_{\alpha}}{4}}+ e^{-\frac{\gamma_{\alpha}}{4}}} \\
     &= \cot \frac{\alpha}{2} 
     \frac{e^{\frac{\gamma_{\alpha}}{2}}-1}{e^{\frac{\gamma_{\alpha}}{2}}+ 1} \\
     &=A \frac{L-1}{L + 1}.
\end{split}
\end{equation*}
If we solve the above equation, $$iV=A \frac{L-1}{L+1},$$ for $L$, we have  
$$L=\frac{A-iV}{A+iV}.$$ 
\end{proof}


\section{Proof of Theorem~\ref{thm:main}} \label{sec:proof}

According to \cite{P2,HLM1,K1,PW}, there exists an angle $\alpha_0 \in [\frac{2\pi}{3},\pi)$ such that $X(\alpha)$ is hyperbolic for $\alpha \in (0, \alpha_0)$, Euclidean for $\alpha=\alpha_0$, and spherical for $\alpha \in (\alpha_0, \pi]$ . 
Denote by $D(X(\alpha))$  the discriminant of
$P(V,A)$ over $V$. Then $\alpha_0$ is the only zero of $D(X(\alpha))$ in $[\frac{2\pi}{3},\pi)$.

From Theorem~\ref{thm:mpytha}, we have the following equality,
\begin{equation}\label{equ:absL}
\begin{split}
|L|^2 &= \left|\frac{A-iV}{A+iV}\right|^2 = \frac{|A|^2+|V|^2+2AImV}{|A|^2+|V|^2-2AImV}.
\end{split}
\end{equation}

For the volume, we choose $L$ with $|L|\geq1$ and hence we have $\text{\textnormal{Im}}(V) \geq 0$ by Equality~(\ref{equ:absL}). The component of $V$ with $\text{\textnormal{Im}}(V) \geq 0$ which becomes real at $\alpha_0$ has negative real part.
 On the geometric component  which gives the unique hyperbolic structure, we have
 the volume of a hyperbolic cone-manifold 
$X(\alpha)$ for $0 \leq \alpha < \alpha_0$:
\begin{align*}
\text{\textrm{Vol}}(X(\alpha)) &=-\int_{\alpha_0}^{\alpha} 2\left( \frac{l_{\alpha}}{2} \right) \: d\alpha \\
                        &=-\int_{\alpha_0}^{\alpha} 2 \log|L| \: d\alpha\\
                         &=-\int_{\pi}^{\alpha} 2 \log|L| \: d\alpha\\
                         &=\int^{\pi}_{\alpha} 2 \log|L| \: d\alpha\\
                         &=\int^{\pi}_{\alpha}  2 \log \left|\frac{A-iV}{A+iV}\right|\: d\alpha,                       
\end{align*}
where the first equality comes from the Schl\"{a}fli formula for cone-manifolds (Theorem 3.20 of~\cite{CHK}), the second equality comes from the fact that $l_{\alpha}=|Re(\gamma_{\alpha})|$ is the real length of the one longitude of 
$X(\alpha)$, the third equality comes from the fact that $\log|L|=0$  for $\alpha_0 < \alpha \leq \pi$ by Equality~(\ref{equ:absL}) since all $V$'s are real for $\alpha_0 < \alpha \leq \pi$, and 
$\alpha_0 \in [\frac{2 \pi}{3},\pi)$ is the zero of the discriminant $D(X(\alpha))$. Numerical calculations give us the following value  for $\alpha_0: \,  \alpha_0 \approx 2.83003.$
\medskip

\end{document}